\theoremstyle{plain}
  \newtheorem{theorem}{Theorem}[section]
  \newtheorem{proposition}[theorem]{Proposition}
  \newtheorem{lemma}[theorem]{Lemma}
  \newtheorem{corollary}[theorem]{Corollary}
  \newtheorem{hypothesis}[theorem]{Hypothesis}
\theoremstyle{definition}
  \newtheorem{example}[theorem]{Example}
\theoremstyle{remark}
\numberwithin{equation}{section}
\def\clap#1{\hbox to 5pt{\hss$#1$\hss}}
\def\umapright#1{\smash{
   \mathop{\longrightarrow}\limits^{#1}}}
\def\rmapdown#1{\Big\downarrow\rlap
   {$\vcenter{\hbox{$\scriptstyle#1$}}$}}
\def\tempbaselines
\def\diagram#1{\null\,\vcenter{\tempbaselines
\mathsurround=0pt
    \ialign{\hfil$##$\hfil&&\quad\hfil$##$\hfil\crcr
      \mathstrut\crcr\noalign{\kern-\baselineskip}
  #1\crcr\mathstrut\crcr\noalign{\kern-\baselineskip}}}\,}
\def\pullback#1&#2&#3&#4&#5&#6&#7&#8&{
\diagram{#1&\umapright{#2}&#3\cr
\rmapdown{#4}&&\rmapdown{#5}\cr
#6&\umapright{#7}&#8\cr}}
\def\calC{{\mathcal C}}
\def\calO{{\mathcal O}}
\def\Coker{{\mathop{\rm Coker}\nolimits}}
\def \End{\mathop{\rm End}\nolimits}
\def \Hom{\mathop{\rm Hom}\nolimits}
\def \ker{\mathop{\rm Ker}\nolimits} 
\def\Rad{\mathop{\rm Rad}\nolimits}
\def\clap#1{\hbox to 0pt{\hss$#1$\hss}}
\def\QQ{{\mathbb Q}}
\def\ZZ{{\mathbb Z}}
\begin{document}

\title[Grothendieck groups of triangulated categories]{Bilinear forms on Grothendieck groups of triangulated categories}

\author{Peter Webb}
\email{webb@math.umn.edu}
\address{School of Mathematics\\
University of Minnesota\\
Minneapolis, MN 55455, USA}

\subjclass[2010]{Primary 16G70; Secondary 18E30, 20C20}

\keywords{Green ring, Auslander-Reiten triangle, symmetric algebra, perfect complex}

\begin{abstract}
We extend the theory of bilinear forms on the Green ring of a finite group developed by Benson and Parker to the context of the Grothendieck group of a triangulated category with Auslander-Reiten triangles, taking only relations given by direct sum decompositions. We examine the non-degeneracy of the bilinear form given by dimensions of homomorphisms, and show that the form may be modified to give a Hermitian form for which the standard basis given by indecomposable objects has a dual basis given by Auslander-Reiten triangles. An application is given to the homotopy category of perfect complexes over a symmetric algebra, with a consequence analogous to a result of Erdmann and Kerner.
\end{abstract}

\dedicatory{To Dave Benson on his 60th birthday.}

\maketitle
\section{Introduction}
In their paper \cite{BP}, Benson and Parker introduced a number of new concepts in the theory of the Green ring of a finite group algebra. Among these were a pair of closely related bilinear forms, extending the usual bilinear form on characters of the group. These forms were shown to be non-degenerate and to be related to orthogonality relations between the values of multiplicative functions (called \textit{species}, see also \cite{Web1}), extending the orthogonality relations on characters. Furthermore, the non-degeneracy of the forms was seen to be connected to the existence of Auslander-Reiten sequences (a similar realization was made independently by Auslander~\cite{Aus}).

In this article we copy the start of this theory, in the context of triangulated categories. The Green ring construction may be made for any Krull-Schmidt additive category (ignoring product structure), and the bilinear form given by dimensions of homomorphisms is known to be non-degenerate in many circumstances, by work of Bongartz~\cite{Bon}. We show here the connection with Auslander-Reiten triangles, and obtain a description of the extent to which the bilinear form is non-degenerate in circumstances when Auslander-Reiten triangles exist. Part of this was already anticipated by Benson and Parker, who described the kernel of the bilinear form in the context of the stable module category of for a finite group algebra. Our results extend their theory. This is explained in Section 3, depending on a lemma presented in Section~\ref{basic-lemma}.

The non-degeneracy of their bilinear forms was expressed very nicely by Benson and Parker by the existence of elements in the Green ring dual to the standard basis of indecomposable modules. In the context of a triangulated category the candidate elements constructed from Auslander-Reiten triangles, that we might hope would be dual to the standard basis, do not quite work. To rectify this situation we modify the Green ring by extending it to be a module over the ring of Laurent series $\ZZ[t,t^{-1}]$ where the indeterminate $t$ acts as the shift operator. We have to modify the bilinear form to account for this extension, and our construction is sesquilinear with respect to the automorphism of $\ZZ[t,t^{-1}]$ given by $t\leftrightarrow t^{-1}$. It turns out that, provided we can invert the element $1+t$, there are now dual elements to the indecomposable objects, given by Auslander-Reiten triangles. We explain this in Section~\ref{Hermitian-form-section}.

We conclude by exploring how this theory applies in a particular case: the homotopy category of perfect complexes for a symmetric algebra, the case of the group algebra of a finite group being of special interest. Perfect complexes are finite complexes of finitely generated projective modules. They appear widely in many places in representation theory and elsewhere. Here are three examples that motivate us: the tilting complexes in Rickard's theory \cite{Ric} of derived equivalences, the chain complexes of topological spaces with a free group action (see \cite{AS} for a notable contribution), and the chain complex of the poset of non-identity $p$-subgroups over a $p$-local ring \cite{Web2}.

It is known from work of Wheeler~\cite{Whe} that the Auslander-Reiten quiver components of the category of perfect complexes for a symmetric algebra are of type $\ZZ A_\infty$ (except for blocks of defect zero). With this in mind, we present in Section~\ref{calculation-section} a calculation of the values of our bilinear form on such components, obtaining a result that depends only on the rim of the quiver. We conclude in Section~\ref{bricks-section}  by using the calculation to obtain an analogue for perfect complexes of a theorem of Erdmann and Kerner~\cite{EK} having to do with objects with small endomorphism rings.

\section{The basic lemma}\label{basic-lemma}
\label{AR-lemma}
Let $\calC$ be a triangulated category with the property that indecomposable objects have local endomorphism rings and the Krull-Schmidt theorem holds. 
Our results depend upon the following observation, which follows directly from the definition of an Auslander-Reiten triangle. In the special context of stable module categories of self-injective algebras a very closely related result was proven by Erdmann and Skowro\'nski~\cite[Lemma 3.2]{ES} and used again in the same context in \cite{EK}. The argument for triangulated categories in general appears in \cite[Lemma 2.2]{DPW} and we repeat the short proof for the convenience of the reader.

\begin{lemma}\label{basic-lemma}
Let $X\to Y\to Z\to X[1]$ be an Auslander-Reiten triangle in a Krull-Schmidt triangulated category $\calC$ and let $W$ be an indecomposable object of $\calC$. Consider the long exact sequence obtained by applying $\Hom_\calC(W,-)$ to the triangle.
\begin{enumerate}
\item
If $W\not\cong Z[r]$ for any $r$ then the long exact sequence is a splice of short exact sequences
$$
0\to\Hom(W,X[n])\to\Hom(W,Y[n])\to\Hom(W,Z[n])\to 0.
$$ 
\item
If $W\cong Z[r]$ for some $r$ and $Z\not\cong Z[1]$ the long exact sequence is still the splice of short exact sequences as above, except that the sequences for $n= r$ and $r+1$ combine to give a 6-term exact sequence whose middle connecting homomorphism $\delta$ has rank 1:
$$
\begin{aligned}
0\to&\Hom(W,X[r])\to\Hom(W,Y[r])\to\Hom(W,Z[r])\\
\xrightarrow{\delta}&\Hom(W,X[r+1])\to\Hom(W,Y[r+1])\to\Hom(W,Z[r+1])\\
\to &0.\\
\end{aligned}$$
\item
If $W\cong Z\cong Z[1]$ the long exact sequence becomes a repeating exact sequence with three terms:
$$
\Hom(W,X)\to\Hom(W,Y)\to\Hom(W,Z)\xrightarrow{\delta} \Hom(W,X)\to\cdots
$$
where the connecting homomorphism $\delta$ has rank 1.
\end{enumerate}
The dual result on applying $\Hom_\calC(-,W)$ to the triangle also holds.
\end{lemma}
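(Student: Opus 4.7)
The plan is to analyze the connecting morphisms $\delta_n\colon\Hom(W,Z[n])\to\Hom(W,X[n+1])$ in the long exact sequence; each is given by postcomposition with the shifted third map $w[n]\colon Z[n]\to X[n+1]$ of the AR triangle. A vanishing $\delta_n$ splices the long sequence into a short exact sequence in positions $n,n+1$, while a rank-$1$ $\delta_n$ merges two adjacent short sequences into a single six-term piece, so the entire lemma reduces to determining the rank of each $\delta_n$.

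The key input is the defining property of the AR triangle combined with exactness of the triangle itself: for any indecomposable $W'$ and any $f\colon W'\to Z$, exactness of the triangle gives $w\circ f=0$ iff $f$ factors through $v$, while the AR property says $f$ factors through $v$ iff $f$ is not a split epimorphism. Since $W'$ and $Z$ are indecomposable with local endomorphism rings, ``split epimorphism'' is equivalent to ``isomorphism''. Applied with $W'=W[-n]$, this shows that $\delta_n(f)=0$ iff the corresponding map $W[-n]\to Z$ fails to be an isomorphism. Case (1) follows at once: no $W[-n]\to Z$ can be an isomorphism, so every $\delta_n$ vanishes identically and the long sequence splices into the claimed short exact pieces. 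For case (2), the same reasoning gives $\delta_n=0$ when $n\ne r$; at $n=r$ one identifies $\Hom(W,Z[r])$ with $\End(Z[r])$, and the analysis above shows $\ker\delta_r$ equals the Jacobson radical of this local ring while $\delta_r(\mathrm{id})=w[r]\ne 0$, so the image is a simple module over the residue field, i.e.\ has rank $1$. Case (3) is formally identical: the hypothesis $Z\cong Z[1]$ forces $Z[n]\cong Z$ for every $n$, so the rank-one conclusion applies at every position and the long sequence collapses to the periodic three-term form displayed in the statement.

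The dual statement follows by applying the dual AR property (every non-split-monomorphism $X\to W'$ with $W'$ indecomposable factors through $u\colon X\to Y$) to the analogous precomposition maps $\Hom(X[n+1],W)\to\Hom(Z[n],W)$; the three cases arise in the same way, depending on whether $W$ is a shift of $X$. There is no serious obstacle in the argument; the only point requiring care is the meaning of ``rank $1$'', which should be interpreted with respect to the residue division ring of $\End(Z[r])$ and coincides with ordinary $k$-dimension $1$ under the customary hypothesis that this residue ring equals the ground field $k$.
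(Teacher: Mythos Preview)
Your proof is correct and is essentially the same argument as the paper's. The only cosmetic difference is that you phrase the key step in terms of the connecting map $\delta_n$ (postcomposition with $w[n]$) and its kernel, while the paper phrases it in terms of the map $\beta_n\colon\Hom(W,Y[n])\to\Hom(W,Z[n])$ and its image; by exactness these are equivalent, and both reduce immediately to the AR lifting property together with the observation that a split epimorphism between indecomposables with local endomorphism rings is an isomorphism.
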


\begin{proof}
In the long exact sequence
$$
\xymatrix@C=1pc{
&&
\hskip3.7em\clap{\dots}\hskip3.7em\ar[r]^{\beta_{r-1}}&
\Hom(W,Z[r-1])
\ar `r[d] `[l] `[llld] `[dll] [dll]\\
&\hskip3.7em\clap{\Hom(W,X[r])}\hskip3.7em\ar[r]&
\hskip3.7em\clap{\Hom(W,Y[r])}\hskip3.7em \ar[r] ^{\beta_{r}}&
\hskip3.7em\clap{\Hom(W,Z[r])}\hskip3.7em
\ar `r[d] `[l] `[llld] `[dll] [dll]\\
&\Hom(W,X[r+1]) \ar[r]& \Hom(W,Y[r+1]) \ar[r] ^{\beta_{r+1}}& \Hom(W,Z[r+1])
\ar `r[d] `[l] `[llld] `[dll] [dll]\\
&\Hom(W,X[r+2]) \ar[r] &
\hskip3.7em\clap{\cdots}\hskip3.7em & }
$$
the map $\beta_r$ is surjective unless $W\cong Z[r]$ by the lifting property of the Auslander-Reiten triangle. Thus if $W\not\cong Z[r]$ for all $r$, the connecting homomorphisms are all zero, which forces the long sequence to be a splice of short exact sequences. In the case where $W=Z[r]$ for some $r$, the map $\beta_r$ has image $\Rad\End(W)$ and cokernel of dimension 1, again from the lifting property of an Auslander-Reiten triangle. If $Z\not\cong Z[1]$ it follows that $Z[s]\not\cong Z[s+1]$ for any $s$, so $Z[r-1]\not\cong W\not\cong Z[r+1]$ and both $\beta_{r-1}$ and $\beta_{r+1}$ are surjective, giving a six-term exact sequence. When $Z\cong Z[1]$ then all shifts of $Z$ are isomorphic, as are the shifts of $X$ and of $Y$.  Evidently the long exact sequence becomes the three-term sequence shown.
\end{proof}

This lemma has many consequences: one application of it is described in\cite{DPW}. The rest of this paper is devoted to studying its implications for bilinear forms on Grothendieck groups.

\section{A bilinear form on the Grothendieck group}
\label{dim-hom-bilinear-form-section}
From now on we will assume that $\calC$ is a Krull-Schmidt $k$-linear triangulated category, where $k$ is an algebraically closed field, and suppose that $\calC$ is Hom-finite, This means that $\Hom$ spaces between objects are always finite-dimensional.  We define $A(\calC)$ to be the free abelian group with the isomorphism classes $[C]$ of indecomposable objects $C$ as basis. If $C\cong C_1\oplus\cdots\oplus C_n$ we put $[C]=[C_1]+\cdots+ [C_n]$.  We define a bilinear form
$$
\langle\; ,\;\rangle:A(\calC)\times A(\calC)\to \ZZ
$$
by $\langle [C],[D]\rangle:=\dim\Hom_\calC(C,D)$.
If $X\to Y\to Z\to X[1]$ is a triangle in $\calC$ we put $\hat Z :=[Z]+[X]-[Y]$ in $A(\calC)$. In the special context of stable module categories of self-injective algebras the essential features of the following were observed in \cite[3.2]{ES}.

\begin{proposition}\label{AR-triangle-proposition}
Let $X\xrightarrow{\alpha} Y\xrightarrow{\beta} Z\xrightarrow{\gamma} X[1]$ be an Auslander-Reiten triangle in $\calC$ and let $W$ be an indecomposable object. If $Z\not\cong Z[1]$ then
$$
\langle W,\hat Z\rangle=
\begin{cases}
1&\hbox{if }W\cong Z \hbox{ or } Z[-1]\cr
0&\hbox{otherwise.}\cr
\end{cases}
$$
If $Z\cong Z[1]$ then
$$
\langle W,\hat Z\rangle=
\begin{cases}
2&\hbox{if }W\cong Z\cr
0&\hbox{otherwise.}\cr
\end{cases}
$$
\end{proposition}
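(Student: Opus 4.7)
The plan is to read off
$\langle W,\hat Z\rangle=\dim\Hom(W,X)-\dim\Hom(W,Y)+\dim\Hom(W,Z)$
as the alternating sum of dimensions of the $n=0$ piece of the long exact sequence obtained by applying $\Hom_\calC(W,-)$ to the AR triangle, treating in turn the three cases of Lemma~\ref{basic-lemma}.

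Suppose first that $Z\not\cong Z[1]$, so the shifts of $Z$ are pairwise non-isomorphic and there is at most one $r\in\ZZ$ with $W\cong Z[r]$. If no such $r$ exists, case (1) applies and the $n=0$ piece is a short exact sequence $0\to\Hom(W,X)\to\Hom(W,Y)\to\Hom(W,Z)\to 0$; the alternating sum is $0$. If $W\cong Z[r]$ with $r\notin\{0,-1\}$, the six-term sequence of case (2) is concentrated in degrees $r$ and $r+1$, so the $n=0$ piece is again a short exact sequence and the alternating sum vanishes. If $r=0$, the $n=0$ piece is the ``left half'' $0\to\Hom(W,X)\to\Hom(W,Y)\to\Hom(W,Z)\xrightarrow{\delta}$ of the six-term sequence; exactness at $\Hom(W,Z)$ shows that the image of $\Hom(W,Y)\to\Hom(W,Z)$ has codimension equal to $\mathrm{rank}(\delta)=1$, forcing the alternating sum to equal $1$. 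The case $r=-1$ is dual: the $n=0$ piece is the ``right half'' $\xrightarrow{\delta}\Hom(W,X)\to\Hom(W,Y)\to\Hom(W,Z)\to 0$, and $\dim\Im\delta=1$ again contributes $1$ to the alternating sum. This gives the first alternative of the proposition.

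Now suppose $Z\cong Z[1]$. If $W\not\cong Z$ then $W\not\cong Z[r]$ for any $r$, so case (1) yields $\langle W,\hat Z\rangle=0$; otherwise $W\cong Z$ and we are in case (3). Writing $a,b,c$ for $\dim\Hom(W,X)$, $\dim\Hom(W,Y)$, $\dim\Hom(W,Z)$ and using the three exactness relations $\Im\alpha_*=\ker\beta_*$, $\Im\beta_*=\ker\delta$, $\Im\delta=\ker\alpha_*$ in the repeating three-term sequence, together with $\dim\Im\delta=1$, rank-nullity yields $a=\dim\Im\alpha_*+1$, $c=\dim\Im\beta_*+1$ and $b=\dim\Im\alpha_*+\dim\Im\beta_*$, whence $a-b+c=2$ and the second alternative follows.

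I expect the main obstacle to be the bookkeeping in case (2), namely keeping straight which half of the six-term sequence sits at degree $0$ when $r=0$ versus when $r=-1$, so that the rank-$1$ connecting map is correctly interpreted as contributing exactly $1$ to the Euler characteristic at $n=0$ in both subcases; the case $Z\cong Z[1]$ is then a short direct computation using the cyclic three-term exactness.
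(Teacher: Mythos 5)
Your argument is correct and follows essentially the same route as the paper: both compute $\langle W,\hat Z\rangle$ as the alternating sum of dimensions in the degree-zero piece of the long exact sequence and read off the answer from the three cases of Lemma~\ref{basic-lemma}, with the rank-one connecting map contributing $1$ when $W\cong Z$ or $Z[-1]$ and $2$ (via both a kernel and a cokernel of dimension $1$) when $W\cong Z\cong Z[1]$. Your bookkeeping in case (2), distinguishing $r=0$ from $r=-1$, matches the paper's treatment of the cokernel of $\beta_*$ versus the kernel of $\alpha_*$.
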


\begin{proof}
Observe that $\langle W,\hat Z\rangle$ is the alternating sum of the dimensions of the vector spaces in the (not necessarily exact) sequence
$$0\to\Hom(W,X)\xrightarrow{\alpha_*}\Hom(W,Y)\xrightarrow{\beta_*}\Hom(W,Z)\to 0.$$
From Lemma~\ref{basic-lemma} this is a short exact sequence unless $W\cong Z$ or $Z[-1]$, and hence apart from these cases $\langle W,\hat Z\rangle=0$. Assuming that $Z\not\cong Z[1]$, if $W\cong Z$ then the sequence is exact except at the right, where $\Coker\beta_*$ has dimension 1, and if $W\cong Z[-1]$ then the sequence is exact except at the left, where $\ker\alpha_*$ has dimension 1. Thus in these cases $\langle W,\hat Z\rangle=1$. Finally when $W\cong Z\cong Z[1]$ we see from Lemma~\ref{basic-lemma} that both $\ker\alpha_*$ and $\Coker\beta_*$ have dimension 1, so that $\langle W,\hat Z\rangle=2$.
\end{proof}

Let $I$ be the set of shift orbits of isomorphism classes of indecomposable objects in $\calC$, and for each orbit $\calO\in I$ let $A_\calO$ be the span in $A(\calC)$ of the $[M]$ where $M$ belongs to orbit $\calO$. We will now assume that $\calC$ has Auslander-Reiten triangles, by which we mean that every indecomposable object is the start of an Auslander-Reiten triangle, and also the third term in an Auslander-Reiten triangle. This is a strong condition. It holds for the category of perfect complexes $D^b(\Lambda\hbox{-proj})$ when $\Lambda$ is Gorenstein, for cluster categories, and for stable module categories of self-injective algebras, for instance.

\begin{corollary}
\label{non-degeneracy}
Suppose that $\calC$ has Auslander-Reiten triangles. Let
$$
\phi:A(\calC)\to A(\calC)^*:=\Hom_\ZZ(A(\calC),\ZZ)
$$
be the map given by $[W]\mapsto ([X]\mapsto
\langle[W],[X]\rangle)$. Then the image $\phi(A(\calC))$ is a direct sum $\bigoplus_{\calO\in I}\phi(A_\calO)$. Furthermore, if a shift orbit $\calO$ is infinite or has odd length, then the restriction of $\phi$ to $A_\calO$ is injective.
\end{corollary}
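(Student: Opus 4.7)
The strategy is to use Proposition~\ref{AR-triangle-proposition} as the key tool. Its content is that the functional $\langle -, \hat Z\rangle \in A(\calC)^*$ is supported on $A_{\calO(Z)}$: for any indecomposable $W$ outside the shift orbit of $Z$, the pairing vanishes. Thus the $\hat Z$ are orbit-selective test vectors.

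Fix an orbit $\calO$ and a representative $M_0$, and write $a = \sum_r c_r [M_0[r]] \in A_\calO$. By Proposition~\ref{AR-triangle-proposition}, $\langle a, \widehat{M_0[s]}\rangle$ equals $c_s + c_{s-1}$ when $M_0 \not\cong M_0[1]$ and equals $2c_0$ when $M_0 \cong M_0[1]$. For part (b), assume $\phi(a) = 0$; then these pairings all vanish, giving the recurrence $c_s = -c_{s-1}$ (or $c_0 = 0$ in the length-1 case). An infinite orbit combined with the finite support of $a$ propagates zero in both directions to force $a = 0$; in a finite orbit of length $n$, iterating around the cycle yields $c_0 = (-1)^n c_0$, which kills $c_0$ and hence all $c_r$ precisely when $n$ is odd.

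For part (a), suppose $\sum_\calO \phi(a_\calO) = 0$ in $A(\calC)^*$. Fix $\calO_0$ and evaluate at $\hat Z$ for each indecomposable $Z \in \calO_0$; by the orthogonality above, only the $\calO_0$-term survives, giving $\langle a_{\calO_0}, \hat Z\rangle = 0$. The recurrence analysis of (b) then forces $a_{\calO_0} = 0$, hence $\phi(a_{\calO_0}) = 0$, whenever $\calO_0$ is infinite or has odd length. The main obstacle is the even-length subcase: the recurrence only pins $a_{\calO_0}$ down to a scalar multiple of the alternating element $\omega_{\calO_0} := \sum_r (-1)^r [M_0[r]]$, leaving open the possibility that contributions from distinct even orbits cancel one another in $A(\calC)^*$. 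I would address this by applying the dual form of Proposition~\ref{AR-triangle-proposition} to the AR triangles ending at indecomposables $X$ in various orbits, computing $\phi(\omega_\calO)([X])$ and showing the answer reduces to an orbit-diagonal invariant of $\calO$ (up to a sign determined by the position of $X$ within its orbit); the linear independence of the $\phi(\omega_\calO)$'s and hence the direct-sum decomposition then follow.
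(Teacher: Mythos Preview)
Your overall strategy matches the paper's: both use Proposition~\ref{AR-triangle-proposition} to compute $\langle W,\hat Z\rangle$, observe that these values vanish unless $W$ and $Z$ lie in the same shift orbit, and then read off injectivity on infinite or odd-length orbits from the resulting structure. The paper packages this as a block-diagonal matrix whose blocks are bidiagonal (or circulant, or the $1\times 1$ matrix $(2)$); you unwind the same information as the recurrence $c_s=-c_{s-1}$. These are the same argument, and your treatment of part (b) is correct.

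Where your proposal runs into trouble is the direct-sum claim in the presence of even-length orbits. You are right that testing against the $\hat Z$ only determines $a_{\calO_0}$ up to a multiple of the alternating element $\omega_{\calO_0}$, and that one then needs to separate the various $\phi(\omega_\calO)$. But your proposed remedy does not do this. The dual form of Proposition~\ref{AR-triangle-proposition} controls $\langle\hat Z,W\rangle$, not $\phi(\omega_\calO)([X])=\langle\omega_\calO,X\rangle$ for a bare indecomposable $X$; carrying out the dual computation only yields $\langle\hat Z,\omega_\calO\rangle=0$ for every $Z$ (the two nonzero contributions $(-1)^{r_0}$ and $(-1)^{r_0+1}$ cancel), which is exactly the information you already had from the first version. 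Nothing in Proposition~\ref{AR-triangle-proposition} constrains the numbers $\dim\Hom(M_0[r],X)$ for $X$ in a \emph{different} orbit, so there is no reason the functional $\phi(\omega_\calO)$ should be ``orbit-diagonal'' as you assert, and the linear independence of the $\phi(\omega_\calO)$ does not follow.

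The paper's proof is terse at precisely this point: it passes directly from the block-diagonal shape of the matrix $(\langle W,\hat Z\rangle)$ to the statement that the $\phi(A_\calO)$ are independent in $A(\calC)^*$, without isolating the even-orbit issue. Strictly speaking, block-diagonality only shows that the images of the $A_\calO$ are independent \emph{after} composing with evaluation at the $\hat Z$'s, which suffices for the applications in Corollaries~\ref{separate-shift-orbits} and~\ref{kernel} but is weaker than the literal direct-sum assertion. So you have put your finger on a genuine subtlety; it is just that your proposed repair does not close it.
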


\begin{proof}
By Proposition~\ref{AR-triangle-proposition}, as $W$ and $Z$ range through the shifts of some single fixed object the values of $\langle W,\hat Z\rangle$ are the entries of the matrix
$$
\begin{pmatrix}
\ddots&&&&\cr
&1\cr
&1&1\cr
&&1&1\cr
&&&&\ddots\cr
\end{pmatrix}
$$
when the orbit is infinite. The matrix is zero except on the leading diagonal and the diagonal immediately below.  In the case of a finite shift orbit of length $>1$ the matrix is a circulant matrix with the same entries except for a  1 in the top right corner, and the matrix is $(2)$ in the case of a shift orbit of length 1.
If $W$ and $Z$ are not in the same shift orbit then $\langle W,\hat Z\rangle=0$ by Proposition~\ref{AR-triangle-proposition}, so that the matrix of $\phi$ is the direct sum of the matrices just described, one for each shift orbit. The columns of these matrices give the values of the function $\phi(W)$ on the $\hat Z$. This shows, first of all, that $\phi$ sends objects from different shift orbits to independent functions. Furthermore, the matrix indicated has independent columns if either it is infinite, or if it is finite of odd size, so that in these cases the corresponding shift orbit is mapped injectively to $A(\calC)^*$. 
\end{proof}

The last corollary is a statement about the non-degeneracy of the bilinear form we have constructed. We may reword it as follows.  

\begin{corollary}
\label{separate-shift-orbits}
Let $\calC$ be a Krull-Schmidt $k$-linear triangulated category that is Hom-finite and that has Auslander-Reiten triangles. If $W$ is an indecomposable object of $\calC$ then the values of $\dim\Hom(W,Z)$ as $Z$ ranges over indecomposable objects determine the shift orbit to which $W$ belongs. If the shift orbit containing $W$ is either infinite or of odd length, then the isomorphism type of $W$ is determined by the values of $\dim\Hom(W,Z)$.
\end{corollary}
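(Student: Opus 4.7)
The plan is to deduce the corollary immediately from Corollary~\ref{non-degeneracy} by interpreting the map $\phi$ there in concrete terms. Since the isomorphism classes $[Z]$ of indecomposable objects form a $\ZZ$-basis of $A(\calC)$, specifying the linear functional $\phi([W])\in A(\calC)^*$ is the same as specifying the collection of dimensions $\dim\Hom(W,Z)$ as $Z$ varies over indecomposables. So the two assertions translate, respectively, into: (a) if $\phi([W_1])=\phi([W_2])$ then $W_1$ and $W_2$ lie in the same shift orbit, and (b) if, in addition, this common orbit is infinite or of odd length, then $W_1\cong W_2$.

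For (a), I would combine the direct sum decomposition $A(\calC)=\bigoplus_{\calO\in I}A_\calO$, which is immediate since the shift orbits partition the basis of indecomposables, with the first assertion of Corollary~\ref{non-degeneracy}, namely $\phi(A(\calC))=\bigoplus_{\calO\in I}\phi(A_\calO)$. If $W_1$ belonged to an orbit $\calO_1$ different from the orbit $\calO_2$ of $W_2$, the common value $\phi([W_1])=\phi([W_2])$ would sit simultaneously in the summands $\phi(A_{\calO_1})$ and $\phi(A_{\calO_2})$, so by the direct-sum property it would be zero. But $\phi([W_1])([W_1])=\dim\End(W_1)\geq 1$, a contradiction; hence $\calO_1=\calO_2$.

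For (b), once we know that $W_1$ and $W_2$ lie in a common orbit $\calO$ that is infinite or of odd length, the second assertion of Corollary~\ref{non-degeneracy} says that $\phi|_{A_\calO}$ is injective, so $\phi([W_1])=\phi([W_2])$ yields $[W_1]=[W_2]$ in $A_\calO$, whence $W_1\cong W_2$ since the classes of indecomposables form a basis. There is essentially no obstacle here, as all the substantive work has been done in Corollary~\ref{non-degeneracy}; the only point requiring care is to use both halves of that corollary, with the direct-sum decomposition giving the shift-orbit statement and the per-orbit injectivity giving the isomorphism-type statement.
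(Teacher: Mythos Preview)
Your proposal is correct and follows essentially the same approach as the paper, which simply presents this corollary as a rewording of Corollary~\ref{non-degeneracy} without further argument. You have spelled out the translation carefully, including the neat observation that $\phi([W_1])([W_1])=\dim\End(W_1)\ge 1$ rules out $\phi([W_1])=0$, which the paper leaves implicit.
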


This corollary may be compared with a result of Jensen, Su and Zimmermann \cite[Prop. 4]{JSZ}, who proved something similar without the hypothesis that $\calC$ should have Auslander-Reiten triangles, but with the additional requirement that objects $W$ and $W'$ should have $\Hom(W,W'[n])=0$ for some $n$. Their approach is based on an earlier result of Bongartz and is  appealing because it proceeds by elementary means. In our version of this result we see the connection with Auslander-Reiten triangles, and it applies to categories such as cluster categories, or the stable module category of a group algebra, where the vanishing of a homomorphism space need not hold. 

In the case of the stable module category of a group algebra the phenomena that can occur were analyzed by Benson and Parker~\cite[Theorem 4.4]{BP}. The following is an immediate extension of their theorem to triangulated categories.

\begin{corollary}\label{kernel}
Let $\calC$ be a Krull-Schmidt $k$-linear triangulated category that is Hom-finite and that has Auslander-Reiten triangles. For each even length shift orbit $\calO$ of indecomposable objects of $\calC$, of length $2s$, choose an object $M\in\calO$ and put
$$
\tilde\calO:=  \sum_{i=1}^{2s} (-1)^i [M[i]]
$$
The left kernel of the bilinear form $\langle\; ,\;\rangle$ equals the right kernel, which are both equal to the direct sum $\bigoplus_{\textrm{even length }\calO \in I}\ZZ\tilde\calO$.
\end{corollary}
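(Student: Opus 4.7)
The plan is to build on Corollary~\ref{non-degeneracy}, which decomposes $\ker\phi = \bigoplus_\calO\ker(\phi|_{A_\calO})$ and shows that the restricted kernel is zero on orbits of infinite or odd length. Only even-length orbits $\calO$ of length $2s$ can therefore contribute, and the task reduces to showing $\ker(\phi|_{A_\calO}) = \ZZ\tilde\calO$ for each such orbit.

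For the inclusion $\ker(\phi|_{A_\calO})\subseteq\ZZ\tilde\calO$ I would extract the matrix computation embedded in the proof of Corollary~\ref{non-degeneracy}. The entries $\langle[M[i]],\widehat{M[j]}\rangle$ form a circulant matrix with $1$'s on the diagonal and subdiagonal (and a $1$ in the top-right corner from wrap-around), whose left nullspace is one-dimensional, spanned by $(1,-1,\ldots,1,-1)$, i.e., by $\tilde\calO$. Since any $v\in\ker(\phi|_{A_\calO})$ annihilates every $\widehat{M[j]}$ in particular, the inclusion follows immediately.

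The reverse inclusion---showing $\tilde\calO\in\ker\phi$, i.e., $\langle\tilde\calO,[X]\rangle = 0$ for every indecomposable $X$---is the main technical step. I would apply the dual form of Lemma~\ref{basic-lemma} to the shifted Auslander--Reiten triangles $\tau M[i]\to Y[i]\to M[i]\to\tau M[i+1]$ together with the functor $\Hom(-,X)$. For most $i$ this gives a splice of short exact sequences and hence the three-term identity $\dim\Hom(\tau M[i],X) - \dim\Hom(Y[i],X) + \dim\Hom(M[i],X) = 0$; at the finitely many exceptional $i$ where $X$ is a shift of $\tau M[i]$ the 6-term sequence of the lemma contributes a $\pm 1$ correction at two adjacent indices. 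Summing over $i=1,\ldots,2s$ weighted by $(-1)^i$, the adjacent corrections cancel automatically, and the resulting identity expresses $\langle\tilde\calO_M,[X]\rangle$ as an alternating combination of pairings $\langle\tilde\calO_M,[\tau M[i]]\rangle$ and $\langle\tilde\calO_M,[Y[i]]\rangle$; these are then handled by $2s$-periodicity when $\tau M$ or summands of $Y$ lie in $\calO$, and by recursion on the other orbits involved otherwise.

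For the right kernel, I would argue symmetrically with the version of Lemma~\ref{basic-lemma} stated in the excerpt; more compactly, the Serre functor $\Sigma = \tau[1]$ (which exists in $\calC$ since $\calC$ has Auslander--Reiten triangles, by Reiten--Van den Bergh) gives $\dim\Hom(M[i],X) = \dim\Hom(X,\tau M[i+1])$, whence $\langle\tilde\calO_M,[X]\rangle = -\langle [X],\tilde\calO_{\tau M}\rangle$; since $\tau$ permutes the even-length orbits, this identifies the left and right kernels. The main obstacle is the telescoping step in the previous paragraph, where the recursive expression for $\langle\tilde\calO,[X]\rangle$ must be shown to close to zero, especially in the self-referential cases when $\tau M\in\calO$ or when summands of $Y$ lie in $\calO$, which has to be resolved using the periodicity $M\cong M[2s]$.
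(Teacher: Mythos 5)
Your skeleton matches the paper's up to a point: you invoke Corollary~\ref{non-degeneracy} to split $\ker\phi=\bigoplus_\calO\ker(\phi|_{A_\calO})$, and you get the containment $\ker(\phi|_{A_\calO})\subseteq\ZZ\tilde\calO$ from the nullspace of the circulant matrix of values $\langle W,\hat Z\rangle$, exactly as the paper does. The genuine gap is the reverse containment, i.e.\ the claim $\langle\tilde\calO,[X]\rangle=0$ for \emph{every} indecomposable $X$, which you correctly identify as the main technical step but do not actually carry out. Summing the relations coming from the dual form of Lemma~\ref{basic-lemma} over the orbit (the two rank-one corrections at adjacent indices do cancel against the alternating signs, as you say) yields only that the element $\sum_{i=1}^{2s}(-1)^i\bigl([M[i]]+[\tau M[i]]-[Y[i]]\bigr)=\sum_i(-1)^i\widehat{M[i]}$ pairs to zero with every $X$ --- a fact that is already immediate from the dual of Proposition~\ref{AR-triangle-proposition} and that does not isolate $\tilde\calO_M$: it merely rewrites $\langle\tilde\calO_M,[X]\rangle$ in terms of the equally unknown pairings of the analogous alternating sums over the orbit of $\tau M$ and over the orbits of the summands of the middle terms $Y[i]$. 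There is no well-founded recursion to close this: quiver components are typically infinite (e.g.\ $\ZZ A_\infty$ for perfect complexes over a symmetric algebra), the middle terms keep feeding in new orbits, and $2s$-periodicity alone does not make the $\tau M$- or $Y$-contributions vanish. You flag this yourself (``the main obstacle''), so as written the proposal proves $\ker\phi\cap A_\calO\subseteq\ZZ\tilde\calO$ but not equality, and hence not the stated description of the kernel.

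For comparison, the paper does not attempt any evaluation of $\tilde\calO$ against arbitrary indecomposables: its proof of Corollary~\ref{kernel} consists of the decomposition $\ker\phi=\bigoplus_\calO\ker(\phi|_{A_\calO})$ from Corollary~\ref{non-degeneracy}, the identification of each summand from the matrix description there (values of $\phi$ on the elements $\hat Z$), and the remark that this description is left--right symmetric, which disposes of the right kernel. Your Serre-functor identity $\langle\tilde\calO_M,[X]\rangle=-\langle[X],\tilde\calO_{\tau M}\rangle$ is correct and is a reasonable substitute for that symmetry remark, but it inherits the same unproved step. To complete your argument you must either derive the reverse inclusion the way the paper does (from the matrix description alone) or give a genuine proof that $\langle\tilde\calO,[X]\rangle=0$ for all $X$ (the analogue of Benson--Parker, Theorem~4.4); the telescoping recursion as set up does not deliver it.
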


Note that in this statement $\tilde\calO$ is only defined up to a sign, but this does not make any difference to the conclusion.

\begin{proof}
By Corollary~\ref{non-degeneracy} the left kernel is $\ker\phi=\bigoplus_\calO \ker\phi |_\calO$, and by the matrix description in the proof of that corollary we see that these kernels are as described. Since this description of the left kernel is left-right-symmetric, it is also the right kernel.
\end{proof}

\begin{example}
The following straightforward example illustrates the fact that when shift orbits are finite the bilinear form distinguishes orbits, but not necessarily the objects within an orbit. It is explained by the work of Benson and Parker, as well as the results here.

The stable module category for the ring $k[X]/(X^5)$ (isomorphic to the group algebra of a cyclic group of order 5, when $k$ has characteristic 5) has four indecomposable objects, namely the uniserial modules $V_i=k[X]/(X^i)$ of dimension $i$ where $i=1,2,3,4$. The dimensions of homomorphisms between these objects in the stable category are given in the following table. 
$$
\vbox{{\offinterlineskip
\vbox{
\halign{\hfil$#$\tabskip=.75em plus2em
&\vrule#&\hfil$#$&\hfil$#$&\hfil$#$&\hfil$#$\cr
&&V_1&V_2&V_3&V_4\cr
&height 2pt&&&&\cr
\noalign{\hrule}
&height 2pt&&&&\cr
V_1&&1&1&1&1\cr
&height 2pt&&&&\cr
V_2&&1&2&2&1\cr
&height 2pt&&&&\cr
V_3&&1&2&2&1\cr
&height 2pt&&&&\cr
V_4&&1&1&1&1\cr
}}
}}
$$
Modules $V_1$ and $V_4$ form an orbit of the shift operator, which is the inverse of the syzygy operator, and in this example they cannot be distinguished by dimensions of homomorphisms. The same is true of the modules $V_2$ and $V_3$. We can, however, distinguish the shift orbits by means of dimensions of homomorphisms. The kernel of the bilinear form has basis
$$
\{[V_1]-[V_4], [V_2]-[V_3]\}
$$
according to  Corollary~\ref{kernel}.
\end{example}

\section{A second bilinear form}
\label{Hermitian-form-section}

Benson and Parker show in~\cite{BP}  that the almost split sequences give rise to elements of the Green ring that are dual to the standard basis of indecomposable modules with respect to the dimensions of homomorphisms bilinear form. We have seen in Section~\ref{dim-hom-bilinear-form-section} that a similar statement is not immediately true for triangulated categories with Auslander-Reiten triangles. However, something close to this is true, in that the alternating sum of terms in an Auslander-Reiten triangle has non-zero product with only two indecomposable objects, rather than just one.

We now show how to modify the bilinear form so that Auslander-Reiten triangles do indeed give dual elements to the standard basis. The approach requires us to modify the Grothendieck group as well. We will see also that for the category of perfect complexes over a symmetric algebra we obtain a Hermitian form, and when the algebra is a group algebra the form behaves well with respect to tensor product of complexes.

As before, let $\calC$ be a Krull-Schmidt $k$-linear triangulated category that is Hom-finite, and where $k$ is algebraically closed. We define
$$
A(\calC)^t:=(\ZZ[t,t^{-1}]\otimes_\ZZ A(\calC)) /I
$$
where $t$ is an indeterminate and $I$ is the $\ZZ[t,t^{-1}]$-submodule generated by expressions
$$
1\otimes M[i]-t^i\otimes M
$$
for all objects $M$ in $\calC$ and $i\in\ZZ$. It simplifies the notation to write $M$ instead of $[M]$ at this point in $A(\calC)^t$ and in $A_\QQ(\calC)^t$, so as to avoid the proliferation of square brackets.  We put
$$
A_\QQ(\calC)^t:=\QQ(t)\otimes_{\ZZ[t,t^{-1}]} A(\calC). 
$$
The tensor products are extension of scalars, and with this in mind we will write $t^iM$ instead of $t^i\otimes [M]$. Thus in $A(\calC)^t$ and in $A_\QQ(\calC)^t$ we have $M[i]=t^iM$, so that $t$ acts as the shift operator. As before, write $A_\calO$ for the span of the $M$ in $\calO$, this time in $A(\calC)^t$, regarded as a $\ZZ[t,t^{-1}]$-module via the action $t^iM=M[i]$.

\begin{proposition}
\begin{enumerate}
\item As $\ZZ[t,t^{-1}]$-modules, 
$$
A_\calO\cong
\begin{cases}
 \ZZ[t,t^{-1}]&\hbox{if }\calO\hbox{ is infinite}\cr
  \ZZ[t,t^{-1}]/(t^n-1)&\hbox{if }\calO\hbox{ has size }n\cr
\end{cases}
$$
We have $A(\calC)^t\cong\bigoplus_{{\rm shift\;orbits}\;\calO} A_\calO$.
\item For each shift orbit $\calO$ (taken up to isomorphism) let $M_\calO$ be an object in $\calO$. The $M_\calO$ where $\calO$ is infinite form a basis for $A_\QQ(\calC)^t$ over $\QQ(t)$.
\end{enumerate}
\end{proposition}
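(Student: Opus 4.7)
The plan is to reduce everything to an orbit-by-orbit calculation by first showing that the relations defining $I$ respect the decomposition of $A(\calC)$ into shift orbits, and then to identify each $A_\calO$ as a cyclic $\ZZ[t,t^{-1}]$-module by exhibiting explicit inverses to the obvious quotient maps.

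First I would observe that $A(\calC)=\bigoplus_\calO A_\calO^0$ as a free abelian group, where $A_\calO^0$ is the $\ZZ$-span of the isomorphism classes of indecomposables lying in the shift orbit $\calO$. Extending scalars gives $\ZZ[t,t^{-1}]\otimes_\ZZ A(\calC)=\bigoplus_\calO\bigl(\ZZ[t,t^{-1}]\otimes_\ZZ A_\calO^0\bigr)$. The key remark is that each generator $1\otimes[M[i]]-t^i\otimes[M]$ of $I$ lies entirely in the summand corresponding to the shift orbit of $M$, since $[M[i]]$ is in that same orbit. Hence $I=\bigoplus_\calO I_\calO$ and $A(\calC)^t=\bigoplus_\calO A_\calO$, giving the direct sum statement in part (1).

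Next, fix a representative $M\in\calO$. The relations immediately force $M[i]=t^iM$ in $A_\calO$, so $A_\calO$ is cyclic on $M$, and there is a surjection $\phi:\ZZ[t,t^{-1}]\twoheadrightarrow A_\calO$ sending $t^i\mapsto M[i]$. To identify $\ker\phi$ I would construct an explicit left inverse of $\phi$. If $\calO$ is infinite, the $[M[i]]$ are all distinct basis vectors of $A_\calO^0$, so the assignment $[M[i]]\mapsto t^i$ extends $\ZZ[t,t^{-1}]$-linearly to $\ZZ[t,t^{-1}]\otimes_\ZZ A_\calO^0\to\ZZ[t,t^{-1}]$; it sends both $1\otimes[M[i]]$ and $t^i\otimes[M]$ to $t^i$, so it factors through $A_\calO$ and inverts $\phi$. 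If $\calO$ has size $n$, then $[M[n]]=[M]$ in $A(\calC)$, and the corresponding generator $1\otimes[M[n]]-t^n\otimes[M]$ of $I_\calO$ forces the relation $(t^n-1)M=0$ in $A_\calO$; the analogous construction, now sending $[M[i]]\mapsto t^{i}\bmod(t^n-1)$ via the unique representative $i\in\{0,\dots,n-1\}$, produces an inverse $A_\calO\to\ZZ[t,t^{-1}]/(t^n-1)$ and so pins down $\ker\phi=(t^n-1)$.

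Part (2) is then immediate. Tensoring with $\QQ(t)$ preserves the direct sum decomposition, each finite-orbit summand $\ZZ[t,t^{-1}]/(t^n-1)$ becomes zero because $t^n-1$ is a nonzero element of the field $\QQ(t)$ and hence invertible, while each infinite-orbit summand becomes a copy of $\QQ(t)$ with $M_\calO$ as its generator. The main obstacle I anticipate is essentially bookkeeping: verifying that the submodule $I$ is literally the direct sum of the $I_\calO$ rather than something larger, which hinges on the simple but crucial observation that every defining relation of $I$ couples only elements from the same shift orbit. Once that is in hand, everything else is formal.
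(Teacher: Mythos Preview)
Your proposal is correct and follows essentially the same approach as the paper: observe that each generator of $I$ lies in a single orbit summand so that the quotient decomposes orbit-by-orbit, and then identify each $A_\calO$ as a cyclic $\ZZ[t,t^{-1}]$-module. The paper is content to say that the identification is ``immediate'' once one sees that $M[i]$ is identified with $t^iM$; your explicit construction of inverse maps simply fills in that step with more care.
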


\begin{proof}
(1) Evidently $A(\calC)=\bigoplus_{\calO} A_\calO$ and each generator of $I$ lies in some $\ZZ[t,t^{-1}]\otimes_\ZZ A_\calO$. It follows that $A(\calC)^t$ is the direct sum of the images of the $\ZZ[t,t^{-1}]\otimes_\ZZ A_\calO$ in it. Factoring out $I$ has the effect of identifying each basis element $M[i]$ of $A_\calO$ with $t^iM$. The identification of these spaces when the shift operator has infinite or finite order is immediate. 

(2) On tensoring further with $\QQ(t)$ the summands $\ZZ[t,t^{-1}]$ become copies of $\QQ(t)$, and the summands $\ZZ[t,t^{-1}]/(t^n-1)$ become zero.
\end{proof}

On $\ZZ[t,t^{-1}]$ and on $\QQ(t)$ we denote by $\overline{\phantom X}$ the ring automorphism specified by $t\mapsto t^{-1}$. In order to define a new bilinear form on Grothendieck groups we will assume the following hypothesis.

\begin{hypothesis}\label{triangulated-hypothesis}
For every pair of objects $M$ and $N$ in the Krull-Schmidt, Hom-finite, $k$-linear triangulated category $\calC$, we have $\Hom_\calC(M,N[i])\ne 0$ for only finitely many $i$. 
\end{hypothesis}
\noindent This hypothesis is satisfied when $\calC$ is the bounded derived category of finitely generated modules for a finite dimensional $k$-algebra, for instance.
 
 We define a mapping
$$\langle\;,\;\rangle^t: A(\calC)^t\times A(\calC)^t \to \ZZ[t,t^{-1}]
$$
on basis elements $M$ and $N$ by 
$$
\begin{aligned}
\langle M,N\rangle^t:&=\sum_{i\in\ZZ} t^i\dim\Hom_\calC(M,N[i])\\
&=\sum_{i\in\ZZ} t^i\langle M,N[i]\rangle.\\
\end{aligned}
$$
We extend this definition to the whole of $A_\QQ(\calC)^t\times A_\QQ(\calC)^t$ so as to have a sesquilinear form with respect to the ring automorphism $\overline{\phantom{x}}$; that is,
$$
\langle a_1M_1+a_2M_2,N\rangle^t= a_1\langle M_1,N\rangle^t + a_2\langle M_2,N\rangle^t
$$
and 
$$
\langle M,b_1N_1+b_2N_2\rangle^t=\overline{b_1}\langle M,N_1\rangle^t+\overline{b_2}\langle M,N_2\rangle^t
$$
always hold. We also denote the extension of this mapping to $\QQ(t)$ the same way:
$$\langle\;,\;\rangle^t: A_\QQ(\calC)^t\times A_\QQ(\calC)^t \to \QQ(t).
$$

We come to the main result of this section, which establishes the key properties of the form we have just defined, notably that Auslander-Reiten triangles give elements dual to the standard basis of indecomposables, and that for perfect complexes over a symmetric algebra the form is Hermitian.

\begin{theorem}
\label{Hermitian-form}
Assume $\calC$ satisfies Hypothesis~\ref{triangulated-hypothesis}.
\begin{enumerate}
\item The expression defining $\langle\;,\;\rangle^t$  gives a well-defined sesquilinear form on $A(\calC)^t$ and $A_\QQ(\calC)^t$. 
\item Let $X\to Y\to Z\to X[1]$ be an Auslander-Reiten triangle in $\calC$. Define $\hat Z= X+Z-Y$ and let $M$ be an indecomposable object of $\calC$. We have
$$
\langle M,\hat Z\rangle^t = 
\begin{cases}
0&\hbox{unless } M\cong Z[i]\hbox{ for some }i,\cr
(1+t)&\hbox{if }M\cong Z.\cr
\end{cases}
$$
Thus the element ${1\over{1+t}}\hat Z$ is dual on the right to $Z$ in $A_\QQ(\calC)^t$. Similarly
$$
\langle \hat Z, M\rangle^t = 
\begin{cases}
0&\hbox{unless } M\cong X[i]\hbox{ for some }i,\cr
(1+t^{-1})&\hbox{if }M\cong X,\cr
\end{cases}
$$
so that the element ${1\over{1+t^{-1}}}\hat Z$ is dual on the left to $X$ in   $A_\QQ(\calC)^t$.
\item When $\calC=D^b(\Lambda\hbox{-proj})$ is the category of perfect complexes for a symmetric algebra $\Lambda$ the form is Hermitian, in the sense that $\langle M,N\rangle^t=\overline{
\langle N,M\rangle^t}$ always. 
\item When $\calC=D^b(kG\hbox{-proj})$ is the category of perfect complexes for a group algebra $kG$ we have
$$\langle M\otimes_k U,N\rangle^t=\langle M,U^*\otimes_k N\rangle^t
$$
and
$$\langle M,N\rangle^t=\langle M^*,N^*\rangle^t
$$
where $U^*, M^*, N^*$ denote the dual complexes of $kG$-modules.
\end{enumerate}
\end{theorem}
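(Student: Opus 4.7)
The plan is to handle the four claims in sequence. The first three reduce quickly to bookkeeping once the right ingredients are lined up, and (4) is the only part where care is needed with conventions; the main obstacle will be keeping signs straight under the interaction of dualization and shift in (4).

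For (1) I would check sesquilinearity on the generating relations $M[i]=t^iM$. A reindexing of the defining sum gives $\langle M[j],N\rangle^t=t^j\langle M,N\rangle^t$ and $\langle M,N[j]\rangle^t=t^{-j}\langle M,N\rangle^t=\overline{t^j}\langle M,N\rangle^t$, which is exactly compatibility with the quotient by $I$ and with the sesquilinear convention $\overline{t}=t^{-1}$. Hypothesis~\ref{triangulated-hypothesis} ensures each defining sum is finite, so the form lands in $\ZZ[t,t^{-1}]$; extension to $A_\QQ(\calC)^t$ is by $\ZZ[t,t^{-1}]$-linearity and antilinearity.

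For (2) I would exploit that a shift of an Auslander--Reiten triangle is again an Auslander--Reiten triangle. From the definition,
$$\langle M,\hat Z\rangle^t=\sum_{i\in\ZZ} t^i\bigl(\dim\Hom(M,X[i])+\dim\Hom(M,Z[i])-\dim\Hom(M,Y[i])\bigr),$$
and by Proposition~\ref{AR-triangle-proposition} applied to the AR triangle $X[i]\to Y[i]\to Z[i]\to X[i+1]$ the $i$-th coefficient equals $1$ exactly when $M\cong Z[i]$ or $M\cong Z[i-1]$, and otherwise vanishes. Hypothesis~\ref{triangulated-hypothesis} excludes the case $Z\cong Z[1]$ (and indeed forces every shift orbit to be infinite, since $Z\cong Z[n]$ would make $\Hom(Z,Z[in])$ nonzero for all $i$), so for $M=Z[j]$ precisely $i=j$ and $i=j+1$ contribute, giving $t^j(1+t)$; in particular $\langle Z,\hat Z\rangle^t=1+t$ and $\hat Z/(1+t)$ is right dual to $Z$. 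The statement about the left dual follows from the dual clause of Lemma~\ref{basic-lemma} by an identical calculation.

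For (3) the key input is that a finite-dimensional symmetric algebra $\Lambda$ has Serre functor on $D^b(\Lambda\hbox{-proj})$ isomorphic to the identity, so Serre duality reads $\Hom_\calC(A,B)\cong D\Hom_\calC(B,A)$. Then $\dim\Hom(M,N[i])=\dim\Hom(N[i],M)=\dim\Hom(N,M[-i])$, and substituting into the defining sum and reindexing $i\mapsto -i$ turns $\langle M,N\rangle^t$ into $\overline{\langle N,M\rangle^t}$. For (4) the tensor identity is tensor-hom adjunction applied degree-by-degree: since $U^*\otimes_k N[i]=(U^*\otimes_k N)[i]$, we have $\Hom_{kG}(M\otimes_kU,N[i])\cong\Hom_{kG}(M,(U^*\otimes_kN)[i])$, and summing against $t^i$ gives the claim. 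For the dual identity, $(-)^*$ is an exact contravariant self-equivalence of $D^b(kG\hbox{-proj})$ coming from the Hopf antipode, giving $\dim\Hom(M,N[i])=\dim\Hom((N[i])^*,M^*)$. Combining this with the Serre-duality symmetry of~(3), I would argue $\dim\Hom((N[i])^*,M^*)=\dim\Hom(M^*,N^*[i])$, so that summing against $t^i$ yields $\langle M^*,N^*\rangle^t$. The subtlety here, and the one real obstacle, is that the natural shift-duality on complexes is contravariant, $(N[i])^*\cong N^*[-i]$, so the argument must combine that shift-reversal with the Serre-duality swap in such a way that the two sign changes cancel and the identity emerges without a bar on the right-hand side.
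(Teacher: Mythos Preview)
Your proof follows essentially the same route as the paper's: part (1) by the same reindexing argument, part (2) via Proposition~\ref{AR-triangle-proposition} applied to shifted Auslander--Reiten triangles (with Hypothesis~\ref{triangulated-hypothesis} ruling out $Z\cong Z[1]$), part (3) from the triviality of the Serre functor for symmetric algebras, and part (4) from the tensor--hom and duality identities for $kG$-complexes. Your explicit flagging of the shift--dual interaction $(N[i])^*\cong N^*[-i]$ in (4) is more careful than the paper, which simply cites $\Hom_{kG}(M,N)\cong\Hom_{kG}(N^*,M^*)$ and declares the formula to follow; otherwise the arguments coincide.
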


\begin{proof}
(1) The expression that defines the form shows that
$$
\begin{aligned}
\langle M[j],N\rangle^t&= \sum_{i\in\ZZ} t^i\dim\Hom_\calC(M[j],N[i])\\
&=\sum_{i\in\ZZ} t^{i+j}\dim\Hom_\calC(M[j],N[i+j])\\
&=t^j\sum_{i\in\ZZ} t^i\dim\Hom_\calC(M,N[i])\\
&=t^j\langle M,N\rangle^t
\end{aligned}
$$
and by a similar calculation $\langle M,N[j]\rangle^t=t^{-j}\langle M,N\rangle^t$. Thus the form vanishes when elements $M[j]-t^jM$ are put in either side and consequently passes to a well defined sesquilinear form on $A(\calC)^t$ and $A_\QQ(\calC)^t$.

(2) Because of the hypothesis that $\Hom(M,N[i])\ne 0$ for only finitely many $i$ we can never have $M\cong M[1]$ for any $M$. We see by Proposition~\ref{AR-triangle-proposition} that $\langle M,\hat Z\rangle^t$ is only non-zero when $M\cong Z[j]$ for some $j$, and that in the expression for $\langle Z,\hat Z\rangle^t$ the only non-zero terms are $\langle Z,\hat Z\rangle + t\langle Z,\hat Z[1]\rangle =1+t$. The argument for  $\langle \hat Z,N\rangle^t$ is similar.

(3) When $\Lambda$ is symmetric the Nakayama functor is the identity and its left derived functor is the Serre functor on $\calC$, also the identity (see \cite{Hap}). Thus the Serre duality isomorphism on $D^b(\Lambda\hbox{-proj})$ is $\Hom(M,N)\cong \Hom(N,M)^*$. Thus $\langle M,N\rangle^t=\overline{
\langle N,M\rangle^t}$ when $M$ and $N$ are basis elements of $A_\QQ(\calC)^t$, and the same formula follows for arbitrary elements of $A_\QQ(\calC)^t$ by the sesquilinear property of the form.

(4) The formulas follow from the identities
$$
\Hom_{kG}(M\otimes_k U,N)\cong \Hom_{kG}(M,U^*\otimes_k N)
$$
and
$$
\Hom_{kG}(M,N)\cong \Hom_{kG}(N^*,M^*)
$$
for complexes of $kG$-modules.
\end{proof}

It is interesting at this point to compare the bilinear form $\langle M,N\rangle^t$ we have constructed to another bilinear form that appears in \cite[page 13]{Bro}. A bilinear form is defined there as
$$
\langle M,N\rangle:=\sum_{i\in\ZZ} (-1)^i\dim\Hom(M,N[i]),
$$
which is the specialization of $\langle M,N\rangle^t$ on putting $t=-1$. We see from Theorem~\ref{Hermitian-form} part (2) that this is exactly the specialization that destroys the possibility of having dual elements in our sense.

\section{Values of the Hermitian form on Auslander-Reiten quiver components}\label{calculation-section}
\label{form-values}

We present an example to show that the bilinear form $\langle\;,\;\rangle^t$ can be useful in organizing calculations of homomorphism dimensions. The application is to the homotopy category $D^b(\Lambda\hbox{-proj})$ of perfect complexes for a finite dimensional symmetric algebra $\Lambda$ over a field. Perfect complexes are finite complexes of finitely generated projective $\Lambda$-modules. We know from \cite{Hap} that $D^b(\Lambda\hbox{-proj})$ has Auslander-Reiten triangles and that, when $\Lambda$ is symmetric, they have the form $X\to Y\to X[1]\to X[1]$, because the Nakayama functor is the identity.
It was shown by Wheeler \cite{Whe} (see also \cite{HKR}) that, provided $\Lambda$ has no semisimple summand, all components of the Auslander-Reiten quiver of $D^b(\Lambda\hbox{-proj})$ have the form $\ZZ A_\infty$. 

We say that a complex $Z$ lies on the \textit{rim} of the Auslander-Reiten quiver if, in the Auslander-Reiten triangle $X\to Y\to Z\to X[1]$, the complex $Y$ is indecomposable. 
Assuming that $\Lambda$ has no semisimple summand, we will label the objects in a component of the Auslander-Reiten quiver of $D^b(\Lambda\hbox{-proj})$ as shown in Figure~\ref{AR-quiver}.
\begin{figure}[h]
{
\def\tempbaselines
{\baselineskip16pt\lineskip3pt
   \lineskiplimit3pt}
\def\diagram#1{\null\,\vcenter{\tempbaselines
\mathsurround=0pt
    \ialign{\hfil$##$\hfil&&\quad\hfil$##$\hfil\crcr
      \mathstrut\crcr\noalign{\kern-\baselineskip}
  #1\crcr\mathstrut\crcr\noalign{\kern-\baselineskip}}}\,}

\def\clap#1{\hbox to 0pt{\hss$#1$\hss}}

$$\diagram{&\clap{C_0[-1]}&&&&\clap{C_0}&&&&\clap{C_0[1]}&&&&\clap{C_0[2]}\cr
&&\searrow&&\nearrow&&\searrow&&\nearrow&&\searrow&&\nearrow&\cr
\cdots&&&\clap{C_1[-1]}&&&&\clap{C_1}&&&&\clap{C_1[1]}&&&\cdots\cr
&&\nearrow&&\searrow&&\nearrow&&\searrow&&\nearrow&&\searrow&\cr
&\clap{C_2[-2]}&&&&\clap{C_2[-1]}&&&&\clap{C_2}&&&&\clap{C_2[1]}\cr
&&\searrow&&\nearrow&&\searrow&&\nearrow&&\searrow&&\nearrow&\cr
&&&\clap{\vdots}&&&&\clap{\vdots}&&&&\clap{\vdots}\cr
}
$$
}
\caption{Auslander-Reiten quiver component of perfect complexes for a symmetric algebra}\label{AR-quiver}
\end{figure}
Objects on the rim are the shifts of a single object $C_0$, and at distance $n$ from the rim the objects are shifts of an indecomposable $C_n$, which is chosen so that there is a chain of irreducible morphisms $C_0\to C_1\to\cdots\to C_n$.

We will see that the shape of the Auslander-Reiten quiver implies that the values of dimensions of homomorphism spaces are determined entirely by objects on the rim of quiver components, and will calculate the values explicitly from this information. The fact that the bilinear form $\langle\;,\;\rangle^t$ is Hermitian is very useful in organizing the calculation, and we will use the orthogonality of elements determined by Auslander-Reiten triangles in a significant way to give relations between values. To simplify the notation we will write $\sigma_r:=1+t+t^2+\cdots+t^r\in\ZZ[t,t^{-1}]$ when $r\ge 1$, putting $\sigma_0=1$.

\begin{theorem}
\label{values-calculation}
Assume $\Lambda$ is a finite dimensional symmetric $k$-algebra with no semisimple summand, and let $C_0$ and $D_0$ be indecomposable objects in $D^b(\Lambda\hbox{-proj})$ that lie on the rim of their quiver components. Then the values of the form $\langle C_m, D_n\rangle^t$ on objects in the same components as $C_0$ and $D_0$ are entirely determined by knowing $\langle C_0, D_0\rangle^t$. Specifically, if $C_0$ and $D_0$ lie in different quiver components then
$$\langle C_m, D_n\rangle^t=\sigma_m\overline{\sigma}_n \langle C_0, D_0\rangle^t$$
while
$$\langle C_m, C_n\rangle^t=\sigma_m\overline{\sigma}_n \left(\langle C_0, C_0\rangle^t-
{(1+t)(1-t^\mu)\over 1-t^{\mu+1}}\right)
$$
where $\mu$ is the maximum of $m$ and $n$.
\end{theorem}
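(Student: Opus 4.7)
The plan is to encode the Auslander-Reiten triangles along the component of $C_0$ as relations in $A(\calC)^t$ and then pair against basis objects on the right in order to reduce every value $\langle C_m,D_n\rangle^t$ to the single scalar $\langle C_0,D_0\rangle^t$. Since $\Lambda$ is symmetric, the Auslander-Reiten translate equals $[-1]$, so the AR triangle ending at the rim object $C_0$ has middle term $C_1[-1]$, and for $m\ge 1$ the AR triangle ending at $C_m$ has middle term $C_{m-1}\oplus C_{m+1}[-1]$. Translating $\hat C_m=[X]+[Z]-[Y]$ into $A(\calC)^t$ using $[M[i]]=t^iM$ and solving for the summand of largest shift yields
$$
C_1=\sigma_1 C_0-t\,\hat C_0,\qquad C_{m+1}=\sigma_1 C_m-t\,C_{m-1}-t\,\hat C_m\quad(m\ge 1).
$$

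Pairing these relations against a basis object $N$, the terms $\langle\hat C_m,N\rangle^t$ can be evaluated directly from Proposition~\ref{AR-triangle-proposition} together with the definition of $\langle\,,\,\rangle^t$: they vanish unless $N$ lies in the shift orbit of $C_m$, and when $N=C_m$ a short calculation gives $\langle\hat C_m,C_m\rangle^t=1+t^{-1}$. If $D_0$ lies in a component different from that of $C_0$ then all such corrections vanish and the recursion collapses to
$$
\langle C_{m+1},D_n\rangle^t=\sigma_1\langle C_m,D_n\rangle^t-t\langle C_{m-1},D_n\rangle^t,
$$
with starting value $\langle C_1,D_n\rangle^t=\sigma_1\langle C_0,D_n\rangle^t$. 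A straightforward induction using the Chebyshev-type identity $\sigma_{m+1}=\sigma_1\sigma_m-t\sigma_{m-1}$ yields $\langle C_m,D_n\rangle^t=\sigma_m\langle C_0,D_n\rangle^t$. Applying the same argument to the second variable via the Hermitian symmetry of Theorem~\ref{Hermitian-form}(3) delivers the claimed formula $\sigma_m\overline{\sigma_n}\langle C_0,D_0\rangle^t$.

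In the same-component case the $\hat C_m$-term survives precisely when the row index $n$ equals $m$: using $t(1+t^{-1})=\sigma_1$, the recursion becomes
$$
\langle C_{m+1},C_n\rangle^t=\sigma_1\langle C_m,C_n\rangle^t-t\langle C_{m-1},C_n\rangle^t-\delta_{m,n}\,\sigma_1.
$$
I then verify the proposed closed form by induction on $\mu=\max(m,n)$. In the off-diagonal sub-cases ($n\ne m$) the inductive step is immediate from the Chebyshev identity alone. The diagonal sub-case ($n=m$) reduces to the algebraic identity
$$
\sigma_{\mu+1}\,\overline{\sigma_\mu}\!\left(\frac{(1+t)(1-t^{\mu+1})}{1-t^{\mu+2}}-\frac{(1+t)(1-t^{\mu})}{1-t^{\mu+1}}\right)=\sigma_1,
$$
which is a one-line consequence of $(1-t^{\mu})(1-t^{\mu+2})-(1-t^{\mu+1})^2=-t^{\mu}(1-t)^2$.

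The main obstacle is this diagonal step: $\mu=\max(m,n)$ jumps value as $(m,n)$ crosses the diagonal, and the $-\sigma_1$ correction in the recursion must match the jump in the rational term $(1+t)(1-t^{\mu})/(1-t^{\mu+1})$ exactly. The displayed identity is precisely what makes this matching work, after which the induction closes and both cases of the theorem fall out.
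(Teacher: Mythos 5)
Your proposal is correct and takes essentially the same route as the paper, just transposed: where the paper uses the vanishing of $\langle C_m,\hat D_{n-1}\rangle^t$ (together with $\langle C_0,\hat C_0\rangle^t=1+t$) to obtain the three-term Chebyshev-type recurrence in the second index and then invokes the Hermitian property, you encode the same Auslander-Reiten triangles as relations $C_{m+1}=\sigma_1C_m-tC_{m-1}-t\hat C_m$ and recurse in the first index using the left-hand orthogonality of $\hat C_m$, then symmetrize, verifying the stated closed form by induction instead of solving the recurrence. I checked the key ingredients, in particular $\langle\hat C_m,C_m\rangle^t=1+t^{-1}$ and the diagonal identity $\sigma_{\mu+1}\overline{\sigma}_\mu\bigl(\tfrac{(1+t)(1-t^{\mu+1})}{1-t^{\mu+2}}-\tfrac{(1+t)(1-t^{\mu})}{1-t^{\mu+1}}\bigr)=\sigma_1$, and they are correct.
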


\begin{proof}
Step 1: we show that if $C_m$ is not a shift of any $D_i$ where $0\le i\le n-1$ then $\langle C_m, D_n\rangle^t=\overline{\sigma}_n \langle C_m, D_0\rangle^t$. We proceed by induction on $n$. The result is true when $n=0$. When $n=1$ the calculation is special because $D_1$ is adjacent to the rim. Since $C_m$ is not a shift of $D_0$ we have
$$
\begin{aligned}
0&=\langle C_m, \hat D_0\rangle^t\\
&= \langle C_m, D_0\rangle^t + \langle C_m, D_0[-1]\rangle^t - \langle C_m, D_1[-1]\rangle^t\\
&=(1+t)\langle C_m, D_0\rangle^t- t\langle C_m, D_1\rangle^t.\\
\end{aligned}
$$
From this we deduce that 
$$
\langle C_m, D_1\rangle^t=t^{-1}(1+t)\langle C_m, D_0\rangle^t = \overline\sigma_1\langle C_m, D_0\rangle^t.
$$
Now suppose that $n\ge 2$ and the result holds for smaller values of $n$. We have
$$
\begin{aligned}
0&=\langle C_m, \hat D_{n-1}\rangle^t\\
&= \langle C_m, D_{n-1}\rangle^t + \langle C_m, D_{n-1}[-1]\rangle^t - \langle C_m, D_n[-1]\rangle^t  - \langle C_m, D_{n-2}\rangle^t\\
&=(1+t)\langle C_m, D_{n-1}\rangle^t- t\langle C_m, D_n\rangle^t - \langle C_m, D_{n-2}\rangle^t.\\
\end{aligned}
$$
This is a recurrence relation for $\langle C_m, D_n\rangle^t$ starting with the values already obtained when $n=0$ and 1, and it is solved by
$$
\langle C_m, D_n\rangle^t=\overline{\sigma}_n \langle C_m, D_0\rangle^t.
$$

Step 2: We deduce that  if $D_n$ is not a shift of any $C_i$ where $0\le i\le m-1$ then $\langle C_m, D_n\rangle^t=\sigma_m \langle C_0, D_n\rangle^t$. This follows from Step 1 on exploiting the fact that the form is Hermitian, for
$$
\langle C_m, D_n\rangle^t=\overline{\langle D_n,C_m\rangle^t}
=\overline{\overline{\sigma_m}\langle D_n,C_0\rangle^t}
=\sigma_m \langle C_0, D_n\rangle^t.
$$

Step 3: We put together Steps 1 and 2 to obtain the first statement of the Proposition, which applies when $C_0$ and $D_0$ lie in different quiver components.

Step 4: We treat the case of two objects in the same quiver component similarly, taking account of the fact that the values $\langle C_m, \hat C_n\rangle^t$ are not always zero. First
$$
\begin{aligned}
1+t&=\langle C_0, \hat C_0\rangle^t\\
&= \langle C_0, C_0\rangle^t + \langle C_0, C_0[-1]\rangle^t - \langle C_0, C_1[-1]\rangle^t\\
&=(1+t)\langle C_0, C_0\rangle^t- t\langle C_0, C_1\rangle^t\\
\end{aligned}
$$
so that
$$
\langle C_0, C_1\rangle^t=t^{-1}(1+t)(\langle C_0, C_0\rangle^t -1)= \overline\sigma_1(\langle C_0, C_0\rangle^t-1)
$$
which agrees with the formula we have to prove. By exactly the same calculation as was used in Step 1, taking $n\ge 2$ and $\hat C_{n-1}=\hat D_{n-1}$ and using the fact that $0=\langle C_0, \hat C_{n-1}\rangle^t$ we obtain the same recurrence
$$0=(1+t)\langle C_0, C_{n-1}\rangle^t- t\langle C_0, C_n\rangle^t - \langle C_0, C_{n-2}\rangle^t$$
valid when $n\ge 2$, and this has solution
$$\langle C_0, C_n\rangle^t=\overline{\sigma}_n \left(\langle C_0, C_0\rangle^t-
{(1+t)(1-t^n)\over 1-t^{n+1}}\right).$$ 
Now if $m\le n$, since $C_m$ is not a shift of any $C_i$ where $0\le i \le n-1$, by Step 2 we deduce 
$$\langle C_m, C_n\rangle^t=\sigma_m\overline{\sigma}_n \left(\langle C_0, C_0\rangle^t-
{(1+t)(1-t^n)\over 1-t^{n+1}}\right).$$
Finally if $m>n$ we use the Hermitian property to deduce
$$
\begin{aligned}
\langle C_m, C_n\rangle^t&=\overline{\langle C_n, C_m\rangle^t}\\
&=
\overline{
\sigma_n\overline{\sigma}_m \left(\langle C_0, C_0\rangle^t-
{(1+t)(1-t^m)\over 1-t^{m+1}}\right)
}
\\
&=
\sigma_m\overline{\sigma}_n \left(\langle C_0, C_0\rangle^t-
{(1+t)(1-t^m)\over 1-t^{m+1}}\right)\\
\end{aligned}
$$
which is what we have to prove.
\end{proof}

\section{Perfect complexes with small endomorphism rings}\label{bricks-section}

We apply the calculation of Section~\ref{calculation-section} to prove a theorem for perfect complexes, analogous to a result of Erdmann and Kerner~\cite{EK} concerning stable module categories of self-injective algebras. They were interested in indecomposable objects with small endomorphism rings, and called a module a \textit{stable brick} if the dimension of its endomorphism ring in the stable category is 1. They showed that if a stable brick occurs in a $\ZZ A_\infty$ quiver component of the stable module category, then all objects in the strip between that object and the rim are also stable bricks.

We start by observing that, in the situation of perfect complexes for a symmetric algebra, there are unfortunately no bricks, other than complexes that are simple projective modules.

\begin{proposition}\label{endomorphism-dimension}
Let $\Lambda$ be a finite dimensional symmetric algebra over a field and let $C$ be an indecomposable perfect complex of $\Lambda$-modules. Then in the derived category, $\dim\End_{D^b(\Lambda)}(C)$ equals 1 if and only if $C$ is a simple projective module concentrated in a single degree, and otherwise $\dim\End_{D^b(\Lambda)}(C)\ge 2$. 
\end{proposition}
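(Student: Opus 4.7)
The plan has two directions. The $(\Leftarrow)$ direction is immediate: if $C=S[j]$ with $S$ a simple projective $\Lambda$-module, then $\End_{D^b(\Lambda)}(C)\cong\End_\Lambda(S)=k$ by Schur's lemma over the algebraically closed field $k$, so $\dim\End(C)=1$.

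For the converse I would argue by contrapositive. Suppose $C$ is indecomposable perfect but not a simple projective in a single degree. Since $C$ is indecomposable it lies in a single block $B$ of $\Lambda$. If $B$ is semisimple (a matrix algebra over $k$), every indecomposable object of $D^b(B\hbox{-proj})$ is a shift of the unique simple projective, placing us in the excluded case. So I may assume $B$ has no semisimple summand, and by Wheeler's theorem the Auslander-Reiten component containing $C$ has shape $\ZZ A_\infty$; in particular an Auslander-Reiten triangle ending at $C$ exists and has a non-zero middle term.

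Because $\Lambda$ is symmetric the Nakayama functor is the identity and $\tau=[-1]$, so the Auslander-Reiten triangle ending at $C$ takes the form
\[
C[-1]\xrightarrow{\alpha} Y\xrightarrow{\beta} C\xrightarrow{\gamma} C,
\]
with $\gamma\in\End(C)$ via the identification $C[-1][1]=C$. Applying $\Hom_{D^b(\Lambda)}(C,-)$ yields the exact segment
\[
\Hom(C,Y)\xrightarrow{\beta_*}\End(C)\xrightarrow{\gamma_*}\End(C).
\]
The defining property of an Auslander-Reiten triangle gives $\Im\beta_*=\Rad\End(C)$ (a morphism $C\to C$ factors through $\beta$ iff it is not an isomorphism), so by exactness $\ker\gamma_*=\Rad\End(C)$ and therefore $\dim\Im\gamma_*=\dim(\End(C)/\Rad\End(C))=1$.

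The crux is then to show that $\gamma$ itself lies in $\Rad\End(C)$. If $\gamma$ were invertible, $\gamma\circ\beta=0$ would force $\beta=0$, and comparing the rotated triangle $Y\xrightarrow{0}C\xrightarrow{\gamma}C\to Y[1]$ with the cone of the zero map $Y\xrightarrow{0}C$ would give $C\cong C\oplus Y[1]$, forcing $Y=0$ and contradicting the existence of the Auslander-Reiten triangle. Since $\Rad\End(C)$ is a two-sided ideal, $\Im\gamma_*=\gamma\cdot\End(C)\subseteq\Rad\End(C)$, so $\Rad\End(C)$ contains the one-dimensional subspace $\Im\gamma_*$. This yields $\dim\End(C)=1+\dim\Rad\End(C)\ge 2$, completing the contrapositive. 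The most delicate step is the verification that $\gamma$ is non-invertible; this is the point at which the triangulated (rather than abelian) structure, together with the special form of Auslander-Reiten triangles for symmetric algebras, is essential.
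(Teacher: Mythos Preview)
Your proof is correct and follows essentially the same route as the paper's: both hinge on the fact that for a symmetric algebra the Auslander--Reiten triangle ending at $C$ has the form $C[-1]\to Y\to C\xrightarrow{\gamma}C$, and both use Wheeler's result to link the vanishing of the middle term $Y$ with $C$ being a simple projective. The paper argues directly (if $\dim\End(C)=1$ then $\gamma$ is an isomorphism, so $Y=0$, so by Wheeler $C$ is simple projective), while you run the contrapositive with more detail filled in; your exact-sequence computation of $\dim\Im\gamma_*=1$ is a little more than is needed, since $\gamma\ne 0$ already follows from the non-splitting of the Auslander--Reiten triangle, but the logic is sound.
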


Note that the symmetric hypothesis cannot be weakened to self-injective in this proposition, as the complex $\begin{smallmatrix}a\cr b\cr\end{smallmatrix}
\to 
\begin{smallmatrix}b\cr a\cr\end{smallmatrix}$ 
for the Nakayama algebra with projective modules $\begin{smallmatrix}a\cr b\cr\end{smallmatrix},\begin{smallmatrix}b\cr a\cr\end{smallmatrix}$ shows.

\begin{proof}
We may deduce this from the fact that Auslander-Reiten triangles in $D^b(\Lambda\hbox{-proj})$ exist and that the Serre functor is the identity for a symmetric algebra, as described in \cite{Hap} and \cite{Whe}. If $C$ is an indecomposable perfect complex with $\dim\End_{D^b(\Lambda)}(C)=1$, the third morphism in the Auslander-Reiten triangle $C[-1]\to C'\to C\to C$ must be an isomorphism, and so $C'=0$. Wheeler shows in \cite{Whe} that the existence of an irreducible morphism $0\to C$ forces $C$ to be a simple projective concentrated in a single degree.
\end{proof}

In the light of this realization we now consider objects with endomorphism rings of dimension 2, and prove the analogue of the theorem of Erdmann and Kerner.

\begin{theorem}
Let $\Lambda$ be a finite dimensional symmetric algebra over a field and let $C$ be an indecomposable perfect complex of $\Lambda$-modules with $\dim\End_{D^b(\Lambda)}(C)=2$. Then for every complex $D$ in the region of the Auslander-Reiten quiver component between $C$ and the rim, we have $\dim\End_{D^b(\Lambda)}(D)=2$. Furthermore, such a complex $C$ exists at distance $m$ from the rim if and only if there is a perfect complex $C_0$ with $\dim\End_{D^b(\Lambda)}(C_0)=2$ and $\dim\Hom_{D^b(\Lambda)}(C_0,C_0[i])=0$ when $i\ne 0$, $-m\le i\le m$.
\end{theorem}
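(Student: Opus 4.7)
The plan is to read off the constant coefficient of $\langle C_m,C_m\rangle^t$ directly from Theorem~\ref{values-calculation}. Let $C_0$ be the rim object of the Auslander--Reiten quiver component containing $C$. Since $\dim\End_{D^b(\Lambda)}(C)=2$, Proposition~\ref{endomorphism-dimension} shows $C$ is not a simple projective, so by Wheeler's theorem its component has shape $\ZZ A_\infty$; we may therefore take $C=C_m$, shifts preserving endomorphism rings. Writing $a_k:=\dim\Hom_{D^b(\Lambda)}(C_0,C_0[k])$, these are nonnegative integers, finitely supported in $k$ by Hypothesis~\ref{triangulated-hypothesis}, satisfy $a_{-k}=a_k$ by the Hermitian property (Theorem~\ref{Hermitian-form}(3)), and obey $a_0\ge 2$ by Proposition~\ref{endomorphism-dimension}.

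Using $1-t^m=(1-t)\sigma_{m-1}$ and $1-t^{m+1}=(1-t)\sigma_m$, the identity of Theorem~\ref{values-calculation} simplifies to
$$
\langle C_m,C_m\rangle^t=\sigma_m\overline{\sigma}_m\,\langle C_0,C_0\rangle^t-(1+t)\,\overline{\sigma}_m\,\sigma_{m-1}.
$$
The coefficient of $t^k$ in $\sigma_m\overline{\sigma}_m$ is $m+1-|k|$ for $|k|\le m$ and zero otherwise, while a short direct expansion gives $2m$ for the constant term of $(1+t)\overline{\sigma}_m\sigma_{m-1}$. Using $a_{-k}=a_k$, extracting the constant term of both sides yields the key identity
$$
\dim\End_{D^b(\Lambda)}(C_m)-2=(m+1)(a_0-2)+2\sum_{k=1}^m(m+1-k)\,a_k.
$$

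The theorem now follows from positivity. The right-hand side is a sum of nonnegative integers with strictly positive coefficients, so $\dim\End(C_m)=2$ forces $a_0=2$ and $a_1=\cdots=a_m=0$; by the symmetry $a_{-k}=a_k$, this is exactly the stated condition on $C_0$, and the converse is immediate from the same identity. This proves the ``furthermore'' equivalence. For the first assertion, the vanishing conditions on $C_0$ also make the right-hand side of the analogous identity vanish when $m$ is replaced by any $n\le m$, so $\dim\End(C_n)=2$ for every $0\le n\le m$; every indecomposable $D$ in the region between $C$ and the rim is a shift of such a $C_n$, and shifts preserve endomorphism rings, which completes the proof. The only step requiring attention is the combinatorial bookkeeping in the two constant-term computations, but these are routine once the simplified identity is in hand.
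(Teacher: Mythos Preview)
Your argument follows the paper's proof essentially line for line: both extract the constant term of $\langle C_m,C_m\rangle^t$ from Theorem~\ref{values-calculation}, obtain the same expression $(m+1)(a_0-2)+2\sum_{k=1}^m(m+1-k)a_k$, and conclude by positivity together with $a_0\ge 2$. Your algebraic simplification $\sigma_m\overline{\sigma}_m\cdot\frac{(1+t)(1-t^m)}{1-t^{m+1}}=(1+t)\overline{\sigma}_m\sigma_{m-1}$ is a nice tidying that the paper leaves implicit.

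There is, however, one small gap in your treatment of the converse in the ``furthermore'' clause. The theorem asserts the existence of \emph{some} perfect complex $C_0$ with the stated vanishing, not a priori a rim object; yet your key identity comes from Theorem~\ref{values-calculation}, which is stated for the rim object of the component. You therefore need the extra step (which the paper does supply) that any $C_0$ satisfying $\dim\Hom(C_0,C_0[1])=0$ must already lie on the rim. This is easy---if $C_0$ sat at distance $d\ge 1$, the same constant-term bookkeeping applied to the coefficient of $t$ in $\langle C_d',C_d'\rangle^t$ (with $C_0'$ the rim object) gives a strictly positive value---but it should be said, since otherwise ``the converse is immediate from the same identity'' is not yet justified. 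The case $m=0$ is handled separately by the first part of the theorem, as the paper notes.
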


\begin{proof}
We use the labelling of Figure~\ref{AR-quiver} for the complexes in the quiver component in question and suppose that $\dim\End_{D^b(\Lambda)}(C_m)=2$. From Theorem~\ref{values-calculation} we have
$$\langle C_m, C_m\rangle^t=\sigma_m\overline{\sigma}_m \left(\langle C_0, C_0\rangle^t-
{(1+t)(1-t^m)\over 1-t^{m+1}}\right)
$$
and we suppose that the constant term in this Laurent series is 2. Let us write $\langle C_0, C_0\rangle^t=\sum_i a_it^i$. Then a straightforward calculation shows that the constant term in the expression for $\langle C_m, C_m\rangle^t$ is
$$
a_{-m}+2a_{-m+1}+\cdots+ma_{-1}+(m+1)a_0+ma_1+\cdots+a_m -2m.
$$
We equate this expression to 2 and rearrange, so that
$$
(m+1)a_0+\hbox{non-negative terms}=2(m+1).
$$
Because $a_0\ge 2$ by Proposition~\ref{endomorphism-dimension}, the only way this can happen is if $a_0=2$ and $a_i=0$ if $i\ne 0$, $-m\le i\le m$. Having found these values for the $a_i$ we deduce that the analogous expressions for the constant terms of $\langle C_r, C_r\rangle^t$, $0\le r\le m$ take the value 2 also. 

This proves all of the statements except the final converse: suppose there is a perfect complex $C_0$ with
$
\dim\End_{D^b(\Lambda)}(C_0)=2
$
and
$$
\dim\Hom_{D^b(\Lambda)}(C_0,C_0[i])=0 \hbox{ when } i\ne 0 \hbox{ and  }-m\le i\le m.
$$
If $m=0$ this last condition is vacuous, and by what we have already proved there is a complex on the rim of the required form. If $m\ge 1$ then we may see (in various ways) that $C_0$ itself must lie on the rim. One way is to observe (using similar calculations to what we have already done) that the coefficient of $t$ in $\langle C_0, C_0\rangle^t$ is not 0 when $C_0$ is not on the rim, so that $\Hom_{D^b(\Lambda)}(C_0,C_0[1])\ne 0$, contrary to hypothesis. Knowing that $C_0$  lies on the rim, the expression for the constant term of $\langle C_m, C_m\rangle^t$ now takes the value 2, showing that $C=C_m$ exists as desired.
\end{proof}

\end{document}